\newcommand{\C}{\mathbb{C}}
\newcommand{\F}{\mathcal{F}}
\newcommand{\G}{\mathcal{G}}
\newcommand{\B}{\mathcal{B}}
\newcommand{\ord}{\text{ord}}
\newtheorem{theorem}{Theorem}[section]
\newtheorem{example}[theorem]{Example}
\newtheorem{remark}[theorem]{Remark} 
\newtheorem{lemma}[theorem]{Lemma}
\newtheorem{corollary}[theorem]{Corollary}
\newtheorem{maintheorem}{Theorem}
\title{An upper bound for the GSV-index of a foliation}
\author[A. Fern\'andez-P\'erez]{Arturo Fern\'andez-P\'erez}
\address[Arturo Fern\'{a}ndez P\'erez] {Department of Mathematics. Federal University of Minas Gerais. Av. Ant\^onio Carlos, 6627 
CEP 31270-901\\
Pampulha - Belo Horizonte - Brazil. ORCID: 0000-0002-5827-8828}
\email{fernandez@ufmg.br}
\author[E. R.  Garc\'{i}a Barroso]{Evelia R. Garc\'{i}a Barroso}
\address[Evelia R. Garc\'{i}a Barroso]{Dpto. Matem\'{a}ticas, Estad\'{\i}stica e Investigaci\'on Operativa\\
IMAULL\\
Universidad de La Laguna. Apartado de Correos 456. 38200 La Laguna, Tenerife, Spain. ORCID: 0000-0001-7575-2619}
\email{ergarcia@ull.es}
\author[N. Saravia-Molina]{Nancy Saravia-Molina}
\address[Nancy Saravia-Molina]{Dpto. Ciencias - Secci\'{o}n Matem\'{a}ticas, Pontificia Universidad Cat\'{o}lica del Per\'{u}, Av. Universitaria 1801,
San Miguel, Lima 32, Peru. ORCID: 0000-0002-2819-8835}
\email{nsaraviam@pucp.edu.pe}
\subjclass[2020]{Primary 32S65 - 32M25}
\keywords{Holomorphic foliations, G\'omez-Mont-Seade-Verjovsky index, Tjurina number, Multiplicity of a foliation along a divisor of separatrices.}
\begin{document}

\begin{abstract}
Let $\F$ be a holomorphic foliation at $p\in\C^2$, and let $B$ be a separatrix of $\F$. We prove the following upper bound $GSV_p(\F,B)\leq 4\tau_p(\F,B)-3\mu_p(\F,B)$, where $GSV_p(\F,B)$ is the \textit{G\'omez-Mont-Seade-Verjovsky index} of the foliation $\F$ with respect to $B$, $\mu_p(\F,B)$ is the multiplicity of $\F$ along $B$ and $\tau_p(\F,B)$ is the dimension of the quotient of $\mathbb C\{x,y\}$ by the ideal generated by the components of any $1$-form defining $\F$ and any equation of $B$. 
\end{abstract}
\maketitle

\section{Introduction}

Let $\F$ be a germ of singular holomorphic foliation at $(\C^{2},p)$ given by the $1$-form $\omega:= P(x,y)dx+Q(x,y)dy$, where $P(x,y),Q(x,y)\in \mathbb C\{x,y\}$, and $B$ be a separatrix of $\F$. Several numerical invariants can be studied for the pair $(\F,B)$, such as the  {\it  G\'omez-Mont--Seade--Verjovsky index} (see \cite{GSV}) of $\F$ with respect to  $B$, denoted by $GSV_p(\F,B)$, the {\it multiplicity of $\F$ along} $B$, $\mu_{p}(\mathcal{F},B)$ (see Section \ref{sec:multiplicity} for definition), and the \textit{Tjurina number} of $\F$ with respect to $B: f(x,y)=0$, defined by \[\tau_p(\F, B):=\dim_{\C}\C\{x,y\}/(P,Q,f).\]  

\par In this paper,
inspired by the blow-up techniques developed by Wang \cite{Wang}, we establish an optimal upper bound for  $GSV_P(\F,B)$ in terms of $\mu_p(\F,B)$ and $\tau_p(\F,B)$ as follows:

\begin{maintheorem}
    \label{th:main}
     Let $\F$ be a germ of a singular holomorphic foliation on $(\mathbb{C}^2,p)$, and $B$ be a separatrix of $\F$ at $p$. Then 
     \begin{equation}\label{eq_main}
    GSV_p(\F,B)\leq 4\tau_p(\F,B)-3\mu_p(\F,B)
     \end{equation}
     where $GSV_p(\F,B)$ denotes the GSV-index of $\F$ with respect to $B$.
     Moreover, the equality holds if and only if $B$ is smooth.
\end{maintheorem}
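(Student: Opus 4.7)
The plan is to induct on the minimal number of point blow-ups required to desingularize the separatrix $B$ at $p$. The base case is $B$ smooth, where I expect the three invariants to coincide so that \eqref{eq_main} becomes the equality $GSV_p(\F,B) = \tau_p(\F,B) = \mu_p(\F,B)$, and hence $4\tau_p - 3\mu_p = \tau_p = GSV_p$ automatically. The inductive step handles a singular $B$ by blowing up $p$ and summing contributions at the intersection points of the strict transform $\tilde B$ with the exceptional divisor.

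For the base case, I would choose coordinates with $B = \{y = 0\}$. Invariance of $B$ forces $P = yA$ for some $A \in \C\{x,y\}$, so $\omega = yA\,dx + Q\,dy$. A direct computation then gives
\[
\tau_p(\F, B) = \dim_\C \C\{x,y\}/(yA, Q, y) = \ord_x Q(x,0).
\]
The definition of $\mu_p(\F,B)$ (order of vanishing of the vector field dual to $\omega$ along $B$, after dividing out the trivial factor of $y$) and the definition of $GSV_p(\F, B)$ for a smooth separatrix are expected to yield the same value $\ord_x Q(x,0)$, closing the base case with equality.

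For the inductive step, let $B$ be singular at $p$ and let $\sigma\colon (\tilde X, E) \to (\C^2, p)$ be the blow-up at $p$. Write $\tilde\F = \sigma^{*}\F$, $\tilde B$ for the strict transform of $B$, $m = \nu_p(B)$, and $\nu = \nu_p(\F)$. At every $q \in \tilde B \cap E$ the separatrix $\tilde B$ requires strictly fewer blow-ups than $B$ did at $p$, so the inductive hypothesis applies at $q$. The plan is then to combine three blow-up formulas of the shape
\[
 I_p(\F, B) = \sum_{q \in \tilde B \cap E} I_q(\tilde\F, \tilde B) + C_I(m, \nu, \ldots), \qquad I \in \{GSV, \mu, \tau\},
\]
where $C_{GSV}$ is of Brunella type, $C_\mu$ is intersection-theoretic, and $C_\tau$ records how the Tjurina ideal changes under $\sigma$. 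Substituting these transformation laws into the sum of the inductive inequalities reduces Theorem~\ref{th:main} to the arithmetic inequality $4C_\tau - 3C_\mu - C_{GSV} \geq 0$, with strict inequality for $m \geq 2$; the latter yields precisely the "only if" direction of the equality claim.

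The principal obstacle I anticipate is establishing the blow-up law for $\tau_p(\F, B)$. Whereas $GSV_p$ and $\mu_p$ transform by classical formulas, the ideal $(P, Q, f)$ does not pull back cleanly: one must divide $\sigma^{*}P$, $\sigma^{*}Q$, $\sigma^{*}f$ by the appropriate powers of the exceptional equation and then compare the resulting colengths. I would tackle this by working in the two standard charts of the blow-up and using a short exact sequence of Artinian local modules to relate $\C\{x,y\}/(P,Q,f)$ to the direct sum of the analogous quotients at the points of $\tilde B \cap E$, with the defect measured by an explicit monomial basis. Once $C_\tau$ is in closed form, the final inequality $4C_\tau - 3C_\mu - C_{GSV} \geq 0$ should collapse to a polynomial inequality in $(m,\nu)$ that can be settled by a short case analysis.
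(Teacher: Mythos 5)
Your overall skeleton --- equality in the smooth base case, induction over the blow-ups desingularizing $B$, and a per-blow-up comparison of the quantity $3\mu-4\tau+GSV$ --- is exactly the structure of the paper's proof (Lemma \ref{eq:rMTG} iterated along the reduction of singularities). The base case is correct, and the transformation laws you invoke for $GSV_p$ (Brunella) and for $\mu_p$ (Carnicer's formula $\mu_q(\tilde\F,\tilde B)=\mu_p(\F,B)-\nu_p(B)(m_p(\F)-1)$) are the ones the paper uses. Note also that a separatrix is irreducible, so $\tilde B$ meets the exceptional divisor in a single point and no summation over $\tilde B\cap E$ is needed.

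The genuine gap is precisely the step you flag as the ``principal obstacle'': there is no closed-form blow-up law $C_\tau(m,\nu)$ for $\tau_p(\F,B)$, and no exact sequence of Artinian modules with an explicit monomial basis will produce one. The Tjurina number is an analytic, not a topological, invariant; its drop under blow-up depends on the analytic moduli of $B$ and not only on $\nu_p(B)$ and $\nu_p(\F)$. (Sanity check: if such a closed form existed, your induction applied to $\F:df=0$ would give an elementary proof of the Dimca--Greuel bound $3\mu(B)<4\tau(B)$ for singular branches, which in fact required substantial analytic input.) So the final ``polynomial inequality in $(m,\nu)$'' you hope to reduce to does not exist; one only has an \emph{inequality} for the drop of $\tau$, and proving it is the real content of the theorem. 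The paper gets around this by (i) the identity $GSV_p(\F,B)=\tau_p(\F,B)-\tau_p(B)$, which converts the variation of the foliation Tjurina number into the variation of the curve Tjurina number $\tau_p(B)-\tau_q(\tilde B)$ plus combinatorially controlled terms, and (ii) Wang's theorem, which gives $\tau_p(B)-\tau_q(\tilde B)\geq \tfrac{m(m-1)}{2}+\alpha$ with $\alpha>\tfrac{m(m-1)}{4}$ for $m=\nu_p(B)\geq 2$, where $\alpha$ is a non-explicit minimum of lengths of a sheaf built from relative K\"ahler differentials over the base of the miniversal deformation of $B$. Feeding these into the three transformation laws shows that $3\mu-4\tau+GSV$ increases by $4\alpha-m(m-1)>0$ at each blow-up of a singular point, which yields both the inequality and the characterization of equality. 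Without importing an input of this strength (Wang's $\alpha$, or the equivalent results of Alberich-Carrami\~nana et al.\ or Genzmer--Hernandes on the minimal Tjurina number), your inductive step cannot be closed.
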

 
\par Note that if $B=\{f=0\}$ is an irreducible plane curve germ,
 then by applying Theorem \ref{th:main} to the foliation $\F: df=0$, we obtain the bound stated in \cite[Question 4.2]{Dimca-Greuel} and proved simultaneously in \cite{Alberichetals}, \cite{Genzmer-Hernandes} and \cite{Wang}, since $\mu_p(\F,B)=\mu(B)$, $\tau_p(\F,B)=\tau(B)$, and $GSV_p(\F,B)=0$ in this case. Here, $\mu(B)$ and $\tau(B)$ denote the classical Milnor and Tjurina numbers of $B$. A complete answer to this question was given by Almir\'on \cite{Almiron}. Moreover, the author proposed a broader perspective of the study of the difference between Milnor and Tjurina numbers for isolated complete intersection singularities. 

\section{Multiplicity of a foliation along a divisor of separatrices}
\label{sec:multiplicity}

Throughout this note $\F$ denotes
a germ of a singular holomorphic foliation at $(\mathbb C^2,p)$, given in local coordinates
$(x,y)$ centered at $p$ by a  1-form $\omega:= P(x,y)dx+Q(x,y)dy$, where
$P(x,y),Q(x,y)\in \mathbb C\{x,y\}$ are coprime; or by  its dual vector field
\[
v:=-Q(x,y)\frac{\partial}{\partial x}+P(x,y)\frac{\partial}{\partial y}.
\]

The {\it algebraic multiplicity} $\nu_p(\F)$ of $\F$ is
the minimum of the orders $\ord_p(P )$ and $\ord_p(Q)$.
Remember that a plane curve germ $f(x,y)=0$ is an {\it $\F-$invariant curve} if $\omega \wedge df=(f.h)dx \wedge dy$, for some $h(x,y)\in \mathbb C\{x,y\}$ and a {\it separatrix} of $\F$ is an irreducible $\F$-invariant curve.

\par Let $\F$ be a germ of a singular foliation at $(\C^2,p)$ induced by the vector field $v$ and $B$ be a separatrix of $\F$. Let $\gamma:(\C,0)\to(\C^2,p)$ be a primitive parametrization of $B$, the \textit{multiplicity of $\F$ along $B$ at $p$} is by definition 
\begin{equation}\label{eq_mult}
\mu_p(\F,B)=\ord_t \theta(t),
\end{equation}
where $\theta(t)$ is the unique vector field at $(\C,0)$ such that 
$\gamma_{*} \theta(t)=v\circ\gamma(t)$, see for instance \cite[p. 159]{CLS}. If $\omega=P(x,y)dx+Q(x,y)dy$ is a 1-form inducing $\F$ and $\gamma(t)=(x(t),y(t))$, then 
\begin{equation}\label{mult_2}
\theta(t)=
\begin{cases}
-\frac{Q(\gamma(t))}{x'(t)} & \text{if $x(t)\neq 0$}
\medskip \\
 \frac{P(\gamma(t))}{y'(t)} & \text{if $y(t)\neq 0$}.
\end{cases}
\end{equation}

We extend the notion of multiplicity of  $\mathcal{F}$ along any nonempty divisor $\mathcal{B}=\sum_Ba_B B$ of separatrices of $\mathcal F$ at $p$ in the following way:
\begin{equation}\label{eq:gmul}
\mu_{p}(\mathcal{F},\mathcal{B})=\left(\displaystyle\sum_{B}a_{B}\mu_{p}(\mathcal{F},B)\right)-\deg(\mathcal{B})+1.
\end{equation}
By convention we put $\mu_{p}(\mathcal{F},\mathcal{B})=1$ for any empty divisor $\mathcal{B}$.
In particular, when $\mathcal{B}=B_1+\cdots+B_r$ is an effective divisor of separatrices of the singular foliation $\mathcal{F}$ at $p$, we rediscover \cite[Equation (2.2), p. 329]{K-S} (for the reduced plane curve $C:=\cup_{i=1}^rB_i$).

\noindent Hence, if we write $\mathcal B=\mathcal B_0-\mathcal B_{\infty}$ where $\mathcal B_0$ and $\mathcal B_{\infty}$ are effective divisors we get

\[
\mu_{p}(\mathcal{F},\mathcal{B})=\mu_{p}(\mathcal{F},\mathcal{B}_0)-\mu_{p}(\mathcal{F},\mathcal{B}_{\infty})+1.
\]

\noindent Since $\mu_{p}(\mathcal F, \mathcal B_{\infty})\geq 1$ we have

\[
\mu_{p}(\mathcal{F},\mathcal{B}_0)\geq \mu_{p}(\mathcal{F},\mathcal{B}).
\]

Denote by $GSV_p(\F,\B)$ the {\it G\'omez-Mont-Seade-Verjovsky index} of the foliation $\F$ at $(\mathbb C^2,p)$ (GSV-index) with respect to the effective divisor $\B$ of separatrices of  $\F$. See \cite{GSV} for details.

\begin{remark}\label{re:irr}
Let $B:f(x,y)=0$ be a separatrix of $\F$ and let $\mathcal{G}_f$ be the hamiltonian foliation defined by $df=0$. From \eqref{eq_mult} and \eqref{mult_2}, we have

\[
\mu_p(\G_f,B)=
\begin{cases}
\ord_t \partial_y{f}(\gamma(t))-\ord_t x(t)+1 & \text{if $x(t)\neq 0$}
\medskip \\
 \ord_t \partial_x{f}(\gamma(t))-\ord_t y(t)+1 & \text{if $y(t)\neq 0$},
\end{cases}
\]

\noindent where $\gamma(t)=(x(t),y(t))$ is a Puiseux parametrization of $B$. By \cite[Proposition 5.7]{FP-GB-SM2021}, we get 
$\mu_p(\F,B)=GSV_p(\F,B)+\mu_p(\mathcal{G}_f,B)$. But, since $B$ is irreducible, after  \cite[Proposition 4.7]{FP-GB-SM2021}, $\mu_p(\G_f,B)=\mu_p(B)$  and so that 
$\mu_p(\F,B)=GSV_p(\F,B)+\mu_p(B).$ 
Hence, according to \cite[Proposition 6.2]{FP-GB-SM2021}, we obtain 
\begin{equation}\label{ec}
\mu_p(\F,B)-\tau_p(\F,B)=\mu_p(B)-\tau_p(B).
\end{equation}
Consequently when $B$ is quasihomogeneous by (\cite{Saito}, Satz p.123 a) and d)] and \eqref{ec} or by \cite[Theorem 4]{Zariski}, $B$ is analytically equivalent to $y^{n}+x^{m}=0$ for some natural coprime numbers $n$, $m >1$,  in this case,  we get  $\mu_p(\F,B)=\tau_p(\F,B)$. 
\end{remark}

\section{Proof of Theorem \ref{th:main}}
\label{sec:proofmain}
Let $C$  be an irreducible plane curve of multiplicity $\nu_p(C)=n$. Denote by $\tilde C$ the strict transform of $C$ by a blow-up  and $\bar C$ the normalization of $C$. We have natural morphisms
 \begin{equation}\label{eqW1}
\pi: \bar C \buildrel \rho\over \longrightarrow \tilde C \buildrel \sigma \over\longrightarrow C.
 \end{equation}
 Consider $(C,p)
 \buildrel i\over\longrightarrow
  (\mathcal C,x_0)  \buildrel \phi\over\longrightarrow (T,t_0)$ the miniversal deformation of $C$. We extend to $\mathcal C$ the morphisms in \eqref{eqW1}:

  \[
\pi: \bar{\mathcal C} \buildrel \rho\over \longrightarrow \tilde{\mathcal C} \buildrel \sigma \over\longrightarrow \mathcal C.
\]

We have two other  natural morphisms $\tilde C \buildrel \tilde \phi \over\longrightarrow T$ and $\bar C \buildrel \bar \phi \over\longrightarrow T$ and its respectively extensions to the deformation, that is $\tilde{\mathcal C} \buildrel \tilde \phi \over\longrightarrow T$ and $\bar{\mathcal C} \buildrel \bar \phi \over\longrightarrow T$.\\
  
  Let $\Omega_{\mathcal C/T}$ (respectively, $\Omega_{\tilde{\mathcal C}/T}$, respectively, $\Omega_{\bar{\mathcal C}/T}$ ) denote the $\mathcal O_{\mathcal C}$-module of relative K\"ahler differentials of $\mathcal C$ over $T$ (respectively, the $\mathcal O_{\tilde{\mathcal C}}$-module of relative K\"ahler differentials of $\tilde{\mathcal C}$ over $T$, respectively the $\mathcal O_{\bar{\mathcal C}}$-module of relative K\"ahler differentials of $\bar{\mathcal C}$ over $T$).  Put $G:=\rho^*\Omega_{\tilde{\mathcal C}/T}/\pi^*\Omega_{{\mathcal C}/T}$. If $C$ is singular then  $G$ is not zero and  $\bar\phi_* G$ is a finite $\mathcal O_T$-module and after Wang (\cite{Wang}) we put $\mathcal D(t)=\hbox{\rm length}_{\mathcal O_{T,t}}((\bar\phi_* G)_t)$ and $\alpha:=\min_{t\in T} \mathcal D(t)$. 
    By \cite[Claim 4.4]{Wang} we have
\begin{equation}\label{in: alpha}
    \alpha >\dfrac{\nu_{p}(C)(\nu_{p}(C)-1)}{4},\,\,\,\,\text{for}\,\,\,\,  \nu_{p}(C) \geq 2.
\end{equation}

\par We will need the following lemma.
\begin{lemma} \label{eq:rMTG} 
Let $\F$ be a germ of a singular foliation on $(\mathbb{C}^2,p)$, and $B$ be a separatrix of $\F$ at $p$. If $\tilde \F$ (respectively  $\tilde B$) is the strict transform of $\F$ (respectively of $B$) by $\sigma$ at the point $q$, then 
\begin{equation}\label{eq:SW}
3\mu_q(\tilde \F,\tilde B)-4\tau_q(\tilde \F,\tilde B)+GSV_q(\tilde \F,\tilde B)\geq 3\mu_p(\F,B)-4\tau_p(\F,B)+GSV_p(\F,B).
\end{equation}
Moreover, the equality holds if and only if $B$ is smooth.
\end{lemma}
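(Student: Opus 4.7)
The plan is first to eliminate the foliation-dependent $GSV$ term from both sides of \eqref{eq:SW} via Remark~\ref{re:irr}, which will reduce the inequality to a purely curve-theoretic statement about $B$ and $\tilde B$, and then to invoke Wang's deformation-theoretic apparatus (the morphisms $\pi\colon\bar{\mathcal C}\to\tilde{\mathcal C}\to\mathcal C$, the sheaf $G$, and the length $\alpha$) to close the argument.

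\medskip

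\noindent\textbf{Step 1 (Algebraic reduction).} I would begin by using the two identities encoded in Remark~\ref{re:irr}: $\mu_p(\F,B)=GSV_p(\F,B)+\mu_p(B)$ and, combining this with the relation $\mu_p(\F,B)-\tau_p(\F,B)=\mu_p(B)-\tau_p(B)$, also $\tau_p(\F,B)=GSV_p(\F,B)+\tau_p(B)$. A direct substitution yields
\[
3\mu_p(\F,B)-4\tau_p(\F,B)+GSV_p(\F,B)=3\mu_p(B)-4\tau_p(B),
\]
and the analogous identity at $q$ for $(\tilde\F,\tilde B)$. Hence \eqref{eq:SW} becomes equivalent to
\[
3\mu_q(\tilde B)-4\tau_q(\tilde B)\ \geq\ 3\mu_p(B)-4\tau_p(B),
\]
a statement that no longer involves the foliation.

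\medskip

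\noindent\textbf{Step 2 (Curve blow-up formulas).} Since $B$ is irreducible, $\tilde B$ meets the exceptional divisor at the single point $q$, and the classical Noether-type formula (via $\mu=2\delta$ and the conductor drop $\delta_p-\delta_q=\binom{\nu_p(B)}{2}$) gives
\[
\mu_p(B)-\mu_q(\tilde B)=\nu_p(B)\bigl(\nu_p(B)-1\bigr).
\]
For the Tjurina side, I would invoke Wang's length computation for the pushforward $\bar\phi_*G$ at the central fibre of the miniversal deformation, producing a lower bound of the shape
\[
4\bigl(\tau_p(B)-\tau_q(\tilde B)\bigr)-3\bigl(\mu_p(B)-\mu_q(\tilde B)\bigr)\ \geq\ 4\alpha-\nu_p(B)\bigl(\nu_p(B)-1\bigr).
\]

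\medskip

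\noindent\textbf{Step 3 (Conclusion and equality case).} Combining Step~2 with \eqref{in: alpha} shows the right-hand side above is strictly positive whenever $\nu_p(B)\geq 2$, giving strict inequality in the reduced form, hence in \eqref{eq:SW}, for singular $B$. When $B$ is smooth at $p$, $\nu_p(B)=1$, so $\tilde B$ is smooth at $q$ and all four invariants $\mu_p(B),\tau_p(B),\mu_q(\tilde B),\tau_q(\tilde B)$ vanish, making both sides of the reduced inequality equal to zero. The hard part will be Step~2: one must extract from Wang's sheaf-theoretic setup the precise coefficients $4$ and $3$ in the lower bound on $4(\tau_p-\tau_q)-3(\mu_p-\mu_q)$, keeping track of all local contributions at $q$ so that the apparently sharp threshold \eqref{in: alpha} is exactly what is needed to close the argument, rather than a weaker constant multiple of it.
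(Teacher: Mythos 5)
Your argument is correct, and it reaches the same quantitative conclusion as the paper --- the surplus term $4\alpha-\nu_p(B)(\nu_p(B)-1)$, made strictly positive by \eqref{in: alpha} --- but it organizes the bookkeeping differently, and more economically. The paper keeps the foliation in play throughout: it transports $\mu_p(\F,B)$ across the blow-up via Carnicer's formula \eqref{eq1N}, transports the GSV-index via Brunella's formula \eqref{GSVb}, converts $\tau_p(\F,B)$ to $\tau_p(B)$ via \eqref{GSVc}, and only at the very end observes that the foliation multiplicity $m_p(\F)$ cancels from the combination $3\mu-4\tau+GSV$. Your Step 1 makes that cancellation structural from the outset: since $\mu_p(\F,B)=GSV_p(\F,B)+\mu_p(B)$ and $\tau_p(\F,B)=GSV_p(\F,B)+\tau_p(B)$ (both contained in Remark \ref{re:irr}, and valid at $q$ as well because $\tilde B$ is again a separatrix of $\tilde\F$), the coefficients $3-4+1=0$ kill the GSV terms and the lemma collapses to Wang's purely curve-theoretic monotonicity statement $3\mu_q(\tilde B)-4\tau_q(\tilde B)\geq 3\mu_p(B)-4\tau_p(B)$. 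This buys you two things: you never need Carnicer's or Brunella's blow-up formulas (only the classical Noether formula $\mu_p(B)-\mu_q(\tilde B)=\nu_p(B)(\nu_p(B)-1)$ for an irreducible germ, together with Wang's bound \eqref{eqW}), and the smooth case becomes the trivial identity $0\geq 0$ rather than the explicit computation \eqref{eq:smooth}. The only caveat is the one you flag yourself at the end of Step 2: the inequality $4(\tau_p(B)-\tau_q(\tilde B))-3(\mu_p(B)-\mu_q(\tilde B))\geq 4\alpha-\nu_p(B)(\nu_p(B)-1)$ should not be re-derived from Wang's sheaf $G$; it is an immediate consequence of the already-quoted inequality \eqref{eqW} and the Noether formula, so no further work with $\bar\phi_*G$ is needed.
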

\begin{proof}
Suppose first $\nu_p(B)\geq 2$. By \cite[Equality (1.2), p. 291]{Car} we have
\begin{equation}\label{eq1N}
\mu _{q}(\tilde{\mathcal{F}},\tilde B)=
\mu _{p}(\mathcal{F},B)-\nu_{p}(B)(m_{p}(\mathcal{F})-1),
\end{equation}
\noindent where 

\[
m_p(\F)=\left\{\begin{array}{ll}
\nu_p(\F)+1 & \hbox{\rm if $\sigma$ is dicritical}\\
\nu_p(\F) & \hbox{\rm if $\sigma$ is nondicritical}.\\
\end{array}
\right.
\]

\noindent On the other hand by the behavior under blow-up of the GSV index \cite[p. 30]{Brunella-book} we get
\begin{equation}\label{GSVb}
GSV_q(\tilde{\F},\tilde B)=GSV_p(\F,B)+\nu_p(B)(\nu_p(B)-m_p(\F)),
\end{equation}
and after \cite[Proposition 6.2]{FP-GB-SM2021}  we have
\begin{equation}\label{GSVc}
 GSV(\F,B)=\tau_p(\F,B)-\tau_p(B).
\end{equation}
\noindent From \cite[Equation (4)]{Wang} we obtain
\begin{equation}\label{eqW}
\tau_p(B)-\tau_q(\tilde B)\geq\frac{\nu_p(B)(\nu_p(B)-1)}{2}+\alpha, 
\end{equation}
\noindent where  $\alpha$ was defined in \eqref{in: alpha}. From \eqref{GSVc}, \eqref{GSVb}, and \eqref{eqW} we deduce
\begin{equation}\label{eq3}
\tau_{q}(\tilde{\mathcal{F}},\tilde{B})\leq\tau_{p}(\mathcal{F},B)+\dfrac{\nu_p(B)[\nu_{p}(B)+1-2m_p(\F)]}{2}- \alpha.
\end{equation}
\noindent After \eqref{eq1N}, \eqref{GSVb} and \eqref{eq3} we get
\begin{eqnarray}\label{eq:estricta}
3\mu_q(\tilde \F,\tilde B)-4\tau_q(\tilde \F,\tilde B)+GSV_q(\tilde \F,\tilde B) &\geq& 3\mu_p(\F,B)-4\tau_p(\F,B)+GSV_p( \F,B)  \nonumber \\ 
&  + & 4\alpha- \nu_p(B)(\nu_p(B)-1)\nonumber \\
&>&3\mu_p(\F,B)-4\tau_p(\F,B)+GSV_p( \F,B),
\end{eqnarray}
where the last inequality follows by \eqref{in: alpha}, since $\nu_{p}(B)\geq 2.$

Suppose now that  $B$ is a non-singular curve. By a change of coordinates, if necessary,  we can suppose that $B$ is given by $x=0$. Since $B$ is a separatrix of $\mathcal{F}:\omega=Pdx+Qdy$ then $\omega \wedge dx = Q dx \wedge dy$. Hence $Q=xh$ for some convergent power series $h(x,y)\in \mathbb C\{x,y\}$  such that $h(0,y)\not = 0$ and $\mathcal{F}:\omega=P(x,y)dx+xh(x,y)dy$. 
Since $\gamma(t)=(0,t)$ is a parametrization of $B$, then $\mu_p(\mathcal{F},B)=\ord_{t}P(0,t)$ and
\begin{equation}\label{eq:smooth}
 \tau_p(\mathcal{F},B)=\dim_{\C}\C\{x,y\}/(P,xh,x)=\dim_{\C}\C\{x,y\}/(P,x)=\mu_p(\mathcal{F},B).   
\end{equation}
 Moreover
by \eqref{GSVc} we have the equality $GSV_p(\F,B)=\tau_p(\F,B)$.
Thus, if $\nu_p(B)=1$  then $\nu_p(\tilde B)=1$, $\mu_p(\F, B)=\tau_p(\F,B)=GSV_p( \F,B)$ and 
$\mu_p(\tilde \F, \tilde B)=\tau_p(\tilde\F,\tilde B)=GSV_p(\tilde \F,\tilde B)$, so \eqref{eq:SW} becomes an equality.
Moreover, if \eqref{eq:SW} is an equality, then $B$ is necessarily smooth. Indeed, if $B$ were singular, by \eqref{in: alpha} the number $-\nu_p(B)(\nu_p(B)-1)+4\alpha$ is positive and from the equality \eqref{eq:estricta} we have that \eqref{eq:SW} is a strict inequality that is a contradiction. Hence the lemma follows.
\end{proof}

\begin{remark}\label{relations}
By definition, for any branch $B$ we have  $\tau_p(B)\leq \mu_p(B)$, so after \cite[Proposition 6.2]{FP-GB-SM2021} and \cite[Propositon 5.7]{FP-GB-SM2021} we get $\tau_p(\F,B)-GSV_p(\F,B)\leq \mu_p(\F,B)-GSV_p(\F,B)$ and 
\begin{equation}\label{eq:desTM}
\tau_p(\F,B)\leq \mu(\F,B).
\end{equation}
In addition, by \eqref{eq:smooth}, if $B$ is smooth then $\tau_p(\F,B)=\mu(\F,B)$.
\end{remark}

We will now prove the main theorem of this paper.
\subsection{Proof of  Theorem \ref{th:main}}
    
    Suppose that $\F$ is a holomorphic foliation at $p\in \mathbb C^{2}$, $B$ is a separatrix of $\F$ at $p$. First, assume that $B$ is singular, that is, $\nu_p(B)\geq 2$. Then, there is a sequence of  blow-ups of  $\mathcal{F}$ at $p$ (cf. Seidenberg \cite{seidenberg}):
 \begin{equation}
(\mathcal{F}^{(N)}, B^{(N)})   \longrightarrow \cdots  \longrightarrow (\mathcal{F}^{(2)}, B^{(2)}) \longrightarrow (\mathcal{F}^{(1)}, B^{(1)})  \longrightarrow (\mathcal{F}^{(0)},B^{(0)})=(\mathcal{F},B), 
 \end{equation}  
 where $B^{(i+1)}$ denotes the strict transform of $B^{(i)}$ thought $q^{(i)}$ for each $i=0,\ldots, N-1$, $B^{(N)}$ is smooth, and $\F^{(N)}$ is a foliation with a simple singularity at $q^{(N)}\in B^{(N)}$, or $\F^{(N)}$ is a regular foliation at $q^{(N)}$, and $B^{(N)}$ is a dicritical separatrix (cf. \cite[p. 1423]{Genzmer-Mol}). 
 Put $\mu^{(i)}:=\mu_{{q^{(i)}}}(\mathcal{F}^{(i)}, B^{(i)})$ and $\tau^{(i)}:=\tau_{{q^{(i)}}}(\mathcal{F}^{(i)}, B^{(i)})$ for any $i\in\{1,\ldots,N\}$.
 
\par Applying Lemma \ref{eq:rMTG} to each blow-up  $(\mathcal{F}^{(i+1)}, B^{(i+1)}) \longrightarrow (\mathcal{F}^{(i)}, B^{(i)})$, 
we get 
\[
3\mu^{(i)}-4\tau^{(i)}+GSV_{q^{(i)}}(\mathcal{F}^{(i)},B^{(i)})\leq 3\mu^{(i+1)}-4\tau^{(i+1)}+GSV_{q^{(i+1)}}(\mathcal{F}^{(i+1)},B^{(i+1)}).
\]
 In particular
\[
3 \mu_{p}(\mathcal{F},B) -4\tau_{p}(\mathcal{F},B)+GSV_p(\mathcal{F},B)<3\mu^{(N)}-4\tau^{(N)}+GSV_{q^{(N)}}(\mathcal{F}^{(N)},B^{(N)})=0,
\]
where the last equality holds, since $B^{(N)}$ is a smooth curve. 
\par If $B$ is a smooth curve then, by Remarks \ref{relations}, and \ref{re:irr}, $GSV_p(\F,B)=\mu_{p}(\mathcal{F},B)=\tau_{p}(\mathcal{F},B)$, then the inequality (\ref{eq_main}) is an equality. Reciprocally,   if (\ref{eq_main}) is an equality, then $B$ is necessarily smooth. Indeed, if $B$ were singular, by Lemma \ref{eq:rMTG}, $3\mu_p(\F,B)-4\tau_p(\F,B)+GSV_p(\F,B)<0$, this is a contradiction. Hence, the theorem follows. 
$\Box$
\section{Examples}
In this section, we provide two examples of holomorphic foliations and separatrices that illustrate Theorem \ref{th:main}.

\begin{example} \label{ex2}
Let $\G_{m,n}$ be the germ of holomorphic foliation at $(\C^2,0)$ defined by 
\[\eta_{m,n}=mxdy-nydx,\] where $1\leq m\leq n$ are integers.  The curve $B: y^m-x^n=0$ is a separatrix of $\G_{m,n}$ at $0\in\C^2$ and 
\begin{eqnarray*}
GSV_0(\G_{m,n},B)&=&m+n-mn, \\
 \mu_0(\G_{m,n},B)&=& 1,\\
 \tau_0(\G_{m,n},B)&=& 1.
\end{eqnarray*}
We get \[3\mu_0(\G_{m,n},B)-4\tau_0(\G_{m,n},B)+GSV_0(\G_{m,n},B)=m+n-mn-1=-(m-1)(n-1)\leq 0.\] Thus, Theorem \ref{th:main} is verified. 
\end{example}

\begin{example}\label{ex:teo1}
Let $\F_m$ be the germ of holomorphic foliation at $(\C^2,0)$ defined by 
\[\omega_m=\left((2m+1)yx^{m+1}+my^{m+2}\right)dx+\left((1-m)xy^{m+1}-2mx^{m+2}\right )dy,\] where $m\in\mathbb{Z}_{>0}$.  The curve $B: x^{2m+1}+x^{m}y^{m+1}+y^{2m}=0$
 is a separatrix of $\F$ at $0\in\C^2$. We get
\[
GSV_0(\F_m,B)=\left\{\begin{array}{cl}
3&\text{if $m=1$}\\
-2m^2+4m+1& \text{ if $m>1$},
\end{array}
\right.
\]
\[
\mu_0(B)=2m(2m-1)\,\,\,\,\,\,\,\,\,\,m\geq 1,
\]
\[
\tau_0(B)=\left\{\begin{array}{cl}
2&\text{if $m=1$}\\
3m^2& \text{ if $m>1$},
\end{array}
\right.
\]
\[
 \mu_0(\F_m,B)=\left\{\begin{array}{cl}
 5&\text{ if $m=1$}\\
 2m^{2}+2m+1&\text{ if $m>1$}
 \end{array}
\right.
 \]
 
 and
 
 \[
 \tau_0(\F_m,B)=\left\{\begin{array}{cl}
 5&\text{ if $m=1$}\\
 m^{2}+4m+1&\text{if $m>1$}.
 \end{array}
\right.
\]

A straightforward calculation reveals that 
\[3\mu_0(\F_m,B)-4\tau_0(\F_m,B)+GSV_0(\F_m,B)=\left\{\begin{array}{cl}
-2&\text{if $m=1$}\\
-6m& \text{ if $m>1$}.
\end{array}
\right.
\]
Therefore, Theorem \ref{th:main} is verified. 
\end{example}
\section{Applications}

We define the sum  
 $\displaystyle T_p(\F,C):=\sum_{B\subset C}\tau_p(\F,B)$ of Tjurina numbers of $\F$ along the irreducible components of $C$, then we get the following inequality for foliations:

\begin{corollary}
\label{coro:general}
   Let $\F$ be a germ of a singular  holomorphic foliation  at $(\C^2,p)$. Let $\B$ be an effective primitive balanced divisor of separatrices for $\F$ at $p$. Then 
    \begin{equation}\label{T}
    GSV_p(\F,\B)<4 T_p(\F,\B)-3\mu_p(\F,\B)
    \end{equation}
\end{corollary}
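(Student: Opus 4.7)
The plan is to reduce the corollary to a branch-by-branch application of Theorem \ref{th:main}, exploiting the additivity properties of the GSV-index and of the multiplicity of $\F$ along a divisor.

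First, since $\B$ is an effective primitive divisor, I would write $\B = B_1 + \cdots + B_r$ as a sum of distinct irreducible separatrices of $\F$ at $p$. Invoking the additivity of the GSV-index on effective primitive divisors (which follows from its definition via a residue / Poincar\'e--Hopf type formula, see \cite{GSV}), I would use
$$GSV_p(\F,\B) = \sum_{i=1}^r GSV_p(\F,B_i).$$
Applying Theorem \ref{th:main} to each branch gives
$$GSV_p(\F,B_i) \le 4\tau_p(\F,B_i) - 3\mu_p(\F,B_i),$$
with equality if and only if $B_i$ is smooth.

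Next, I would sum these $r$ inequalities and substitute the defining identity \eqref{eq:gmul}, which rearranges to $\sum_{i=1}^r \mu_p(\F,B_i) = \mu_p(\F,\B) + \deg(\B) - 1$. This yields
$$GSV_p(\F,\B) \le 4\,T_p(\F,\B) - 3\,\mu_p(\F,\B) - 3\bigl(\deg(\B) - 1\bigr).$$
The strict inequality in \eqref{T} then follows from two complementary observations: if at least one $B_i$ is singular, the corresponding application of Theorem \ref{th:main} is already strict; if every $B_i$ is smooth but $\deg(\B) \ge 2$, the correction term $-3(\deg(\B)-1) < 0$ supplies the required slack.

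The main obstacle I anticipate is the borderline case $\deg(\B) = 1$ with a single smooth separatrix, where every step above collapses to equality and the strict bound in \eqref{T} would fail. This case must be ruled out by the hypothesis that $\B$ is a \emph{balanced} divisor of separatrices at a genuine singularity of $\F$ (following the reduction process of Seidenberg \cite{seidenberg} used in the proof of Theorem \ref{th:main}), which forces $\B$ to contain either a singular branch or at least two branches. Once that definitional point is spelled out, the proof amounts to summing the branch-wise estimates provided by Theorem \ref{th:main}; no further blow-up analysis is needed, since all the hard work has already been done at the level of a single separatrix.
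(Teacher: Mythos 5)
Your overall route is the same as the paper's: decompose $\B$ into its branches, apply Theorem \ref{th:main} to each one, and recombine using \eqref{eq:gmul} and the behaviour of the GSV-index under unions. Two points need attention. First, the identity $GSV_p(\F,\B)=\sum_{i}GSV_p(\F,B_i)$ is false: the GSV-index is not plainly additive on unions of branches. The correct addition formula (Brunella, p.~29, which is what the paper invokes) reads
\[
GSV_p(\F,\B)=\sum_{i=1}^{r}GSV_p(\F,B_i)-2\sum_{i<k}i_p(B_i,B_k),
\]
where $i_p(B_i,B_k)$ is the intersection multiplicity. Your argument survives because the omitted term is non-positive, so the inequality $GSV_p(\F,\B)\leq\sum_{i}GSV_p(\F,B_i)$ that you actually use is still valid; but as written you assert an equality that does not hold. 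The paper keeps the correction term and can draw strictness for $r\geq 2$ from it as well as from the $-3(r-1)$ produced by \eqref{eq:gmul}, which is the term you rely on.

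Second, the borderline case $r=1$ with a smooth branch, where every estimate collapses to an equality and \eqref{T} would fail. You are right to single this out --- the paper's own proof does not treat it: its first display asserts that each branch contributes at most $-1$, which is not what Theorem \ref{th:main} gives for a smooth separatrix (it gives $0$ with equality). So here you are more careful than the source. However, your resolution is only a pointer: you must actually verify from the Genzmer--Mol definition that a singular foliation cannot admit a balanced divisor consisting of a single smooth convergent branch. For a non-dicritical generalized curve this follows from $\nu_p(\B)=\nu_p(\F)+1\geq 2$, but for saddle-node-type singularities the balanced divisor may involve a purely formal separatrix, and the claim requires an argument there. Until that definitional point is established, this step remains a genuine gap in your proof (and, arguably, in the paper's).
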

\begin{proof} 
Suppose that $\B=\sum_{j=1}^{\ell} B_j$, where each $B_j$ is a separatrix for $\F$. 
It follows from Theorem \ref{th:main} that \[\sum_{j=1}^{\ell}\left[3\mu_p(\F,B_j)-4\tau_p(\F,B_j)+GSV_p(\F,B_j)\right]\leq -\ell.\]

\noindent Thus 
\[3\sum_{j=1}^{\ell}\mu_p(\F,B_j)-4T_p(\F,\B)+\sum_{j=1}^{\ell}GSV_p(\F,B_j)\leq-\ell\]

\noindent and, using  \eqref{eq:gmul} together with the GSV-index addition formula (\cite[p. 29]{Brunella-book}), we obtain

\[3\left(\mu_p(\F,\B)+\ell-1\right)-4T_p(\F,\B)+GSV_p(\F,\B)+2\sum_{i\neq k}i_p(B_i,B_k)\leq-\ell,\]

\noindent so 

\[3\mu_p(\F,\B)-4T_p(\F,\B)+GSV_p(\F,\B)\leq-4\ell+3-2\sum_{i\neq k}i_p(B_i,B_k).\]
Since $\ell\geq 1$, we get
\[3\mu_p(\F,\B)-4T_p(\F,\B)+GSV_p(\F,\B)<0.\]
\end{proof}

The following example illustrates Corollary \ref{coro:general}.

\begin{example}\label{ex:cor}
Let $\F:\omega=(2x^7+5y^5)dx-xy^2(5y^2+3x^5)dy$, whose leaves are contained in the connected components of the curves $C_{\lambda,\zeta}:\zeta(y^5-x^7+x^5y^3)-\lambda x^5=0$, where $(\zeta,\lambda)\neq(0,0)$. Let $C: x=0$, and consider the effective primitive balanced divisor $\B= C+C_{1,1}$ of separatrices for $\F$ at $0\in \mathbb C^{2}$. We have $GSV_0(\F,C)=GSV_0(\F,C_{1,1})=5$,  so \[GSV_0(\F,\B)=GSV_0(\F,C)+GSV_0(\F,C_{1,1})-2i_0(C,C_{1,1})=5+5-2\cdot 5=0,\] 
$\mu_0(\F,\B)=\mu_0(\F,C)+\mu_0(\F,C_{1,1})-deg(\B)+1=5+21-2+1=25$, and $T_0(\F,\B)=\tau_0(\F,C)+\tau_0(\F,C_{1,1})=5+21=26$. Hence
\[GSV_p(\F,\B)=0<4T_p(\F,\B)-3\mu_p(\F,\B)=29.\]
\end{example}

\noindent We conclude the paper with a question:

\noindent {\bf {Question:}} Is the inequality \eqref{T} true if instead of $T_{p}(\mathcal{F},\B)$ we write $\tau_{p}(\mathcal{F},\B)$?

\bigskip

\noindent {\it Acknowledgments.}
The authors gratefully acknowledge the support of Universidad de La Laguna (Tenerife, Spain), where part of this work was done.

\medskip

\noindent {\it Funding.}
This work was funded by the Direcci\'on de Fomento de la Investigaci\'on at the PUCP through grant DFI-2023-PI0979 and  by the Spanish grant PID2019-105896GB-I00 funded by MCIN/AEI/10.13039/501100011033.

\medskip

\noindent {\it Conflict of interest.}
The authors of this work declare that they have no conflicts of interest.

\end{document}